\newtheorem{thm}{Theorem}[section]
\newtheorem{cor}[thm]{Corollary}
\newtheorem{lem}[thm]{Lemma}
\newtheorem{rek}[thm]{Remark}
\title{Matchings with prescribed color counts}
\author{Tudor Popescu} \address[Tudor Popescu]{Brandeis University, Waltham, MA 02453}\email{tudorpopescu@brandeis.edu}
\date{}
\keywords{graph theory, perfect matchings, colored graphs, bipartite graphs}
\begin{document}

\maketitle
\begin{abstract}
    In this note, we prove an interesting result about perfect matchings in a \begin{math}K_{2n, 2n}\end{math} whose edges are colored in red and blue such that each vertex is part of \begin{math}n\end{math} red edges and \begin{math}n\end{math} blue edges.
\end{abstract} 

\section{Introduction} 
\noindent A classical problem in complexity theory is the Exact Matching (EM) Problem, where given an edge-colored graph, with colors red and blue, and an integer \begin{math}k,\end{math} the goal is to decide whether or not the graph contains a perfect matching with exactly \begin{math}k\end{math} red edges. The problem was first introduced in 1982 by Papadimitriou and Yannakakis in \emph{[2]}, and in 1987 Mulmuley, Vazirani, and Vazirani \emph{[3]} showed that there is a randomized algorithm for this problem. We consider the complete bipartite (\begin{math}K_{2n, 2n}\end{math}) variant of this problem where each vertex is part of \begin{math}n\end{math} blue edges and \begin{math}n\end{math} red ones. In this paper, we will prove that if \begin{math}k\end{math} is even, there always exists a perfect matching with \begin{math}k\end{math} red edges and \begin{math}2n - k\end{math} blue ones, and that the same holds when \begin{math}k\end{math} is odd unless the graph has a very specific structure (if the graph formed by blue edges is disconnected).

\section{Main Theorem}

\begin{thm}
\label{thm:main}
    Let \begin{math}k, n \in \mathbb{N}\end{math} with \begin{math}k \le 2n.\end{math} Color all the edges of a \begin{math}K_{2n, 2n}\end{math} in red and blue such that each vertex is incident with \begin{math}n\end{math} red edges and \begin{math}n\end{math} blue ones. Then there exists a perfect matching with \begin{math}k\end{math} red edges and \begin{math}2n - k\end{math} blue ones unless \begin{math}k\end{math} is odd and the graph of blue edges is isomorphic to \begin{math}K_{n, n} \cup K_{n, n}\end{math}.
\end{thm}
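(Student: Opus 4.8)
The plan is to split into two cases according to whether the blue graph (equivalently, the red graph) is connected. Since an $n$-regular bipartite graph has a perfect matching (Hall/König), the all-blue and all-red matchings exist, so $0$ and $2n$ occur as red-edge counts. Suppose first the blue graph is disconnected: every component of an $n$-regular bipartite graph has equally many vertices on each side, and that number is at least $n$, so there are exactly two components, each an $n$-regular bipartite graph on $n+n$ vertices, i.e.\ a $K_{n,n}$; write the blue graph as $K_{n,n}$ on $A_1\cup B_1$ together with $K_{n,n}$ on $A_2\cup B_2$. Then the red edges are exactly those between $A_1$ and $B_2$ and between $A_2$ and $B_1$, so the red graph is again a $K_{n,n}\cup K_{n,n}$, we are in the excluded case, and a perfect matching that uses $j$ red edges from $A_1$ to $B_2$ is forced to use $j$ red edges from $A_2$ to $B_1$ as well; its red edges therefore number $2j$, and conversely every $j\in\{0,\dots,n\}$ occurs. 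So here the attainable red-counts are exactly $\{0,2,4,\dots,2n\}$, which proves the theorem (and shows the ``unless'' clause is sharp). From now on the blue and red graphs are assumed connected, and I must realize every $k\in\{0,\dots,2n\}$.

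The first main step is to get a perfect matching with exactly one red edge. Fix a red edge $ab$; I claim the blue graph with $a$ and $b$ deleted still has a perfect matching, so adjoining $ab$ to it gives what we want. By Hall's theorem this reduces to $|N_{\mathrm{blue}}(X)|\ge|X|+1$ for every $X$ with $\varnothing\ne X\subsetneq A$ (then $|N_{\mathrm{blue}}(X)\setminus\{b\}|\ge|X|$ for all $X\subseteq A\setminus\{a\}$), which is a standard regularity-plus-connectivity fact: counting blue edges out of $X$ gives $n|X|\le n\,|N_{\mathrm{blue}}(X)|$, hence $|N_{\mathrm{blue}}(X)|\ge|X|$, and if equality held then every blue edge meeting $N_{\mathrm{blue}}(X)$ would have to land in $X$, making $X\cup N_{\mathrm{blue}}(X)$ a union of components of the blue graph --- impossible, since the blue graph is connected and $X\ne A$. (By red--blue symmetry the same argument also yields a perfect matching with $2n-1$ red edges.)

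The second main step is: if some perfect matching $M$ has $k$ red edges with $k\le 2n-2$, then some perfect matching has $k+2$ red edges. Let $M_R$ be the all-red matching and decompose $M\triangle M_R$ into $M$-alternating cycles $C_1,\dots,C_p$. On each $C_i$ the non-$M$ edges lie in $M_R$, hence are red, so flipping $C_i$ (exchanging its $M$-edges for its non-$M$ edges) produces a perfect matching with $\beta_i$ more red edges, where $\beta_i$ is the number of blue $M$-edges on $C_i$; note $\sum_i\beta_i=2n-k\ge 2$. If two of the $\beta_i$ equal $1$, flip both; if some $\beta_i$ equals $2$, flip that one: in either case we gain exactly two red edges. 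Otherwise some cycle $C^{*}$ has $\beta(C^{*})\ge 3$, and I perform a \emph{partial flip}: choose an arc of $C^{*}$ consisting of $q$ consecutive $M$-edges together with the $q-1$ (red) non-$M$ edges between them; deleting those $q$ $M$-edges, inserting those $q-1$ red edges, and joining the two free ends of the arc by the edge of $K_{2n,2n}$ between them (they lie on opposite sides) again gives a perfect matching, whose red-count exceeds that of $M$ by $(\text{number of blue }M\text{-edges in the arc})-1+[\text{the joining edge is red}]$. Taking the arc to enclose exactly two, respectively three, blue $M$-edges of $C^{*}$ makes this gain lie in $\{1,2\}$, respectively $\{2,3\}$; so unless every ``two-edge'' arc closes with a blue edge while every ``three-edge'' arc closes with a red edge, we obtain a gain of exactly two.

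Putting these together, iterating the second step from the all-blue matching yields matchings with $0,2,\dots,2n$ red edges, iterating it from the matching of the first step yields $1,3,\dots,2n-1$, and with the disconnected case this proves the theorem. The step I expect to be the real obstacle is the leftover case of the second step, where the closing edges conspire: this pins down a rigid $2$-colouring along $C^{*}$ (if its blue $M$-edges, traversed cyclically, are $x_1y_1,\dots,x_sy_s$, then $x_ty_{t+1}$ is blue while $x_ty_{t+2}$ is red for every $t$), which is exactly the local picture produced by a $K_{n,n}\cup K_{n,n}$. I would close the argument by showing this pattern cannot persist under connectivity and $n$-regularity --- either by extending the alternating structure so that it leaves $C^{*}$ and uses edges incident to the red part of $M$ (thereby flipping one of the closing-edge colours), or by combining a partial flip of $C^{*}$ with a flip on another cycle, or by deriving a direct contradiction with the degree conditions; the remaining steps are routine bookkeeping.
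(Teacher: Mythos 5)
The parts you do carry out are correct, and your architecture is genuinely different from the paper's: you first dispose of the disconnected case structurally, then build matchings with $0$ and $1$ red edges (the latter via the strict Hall inequality $|N_{\mathrm{blue}}(X)|\ge |X|+1$ for connected $n$-regular blue graphs, a clean argument the paper does not use), and then try to climb in steps of $2$ by flipping cycles of $M\triangle M_R$. The paper instead pairs the edges of an all-blue matching into $K_{2,2}$ blocks, uses a pigeonhole exchange to force a red cross edge into every block, and reads off the answer from the blocks.

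However, there is a genuine gap, and it sits exactly where the theorem's real content lies: your ``$k\mapsto k+2$'' step is not proved. After reducing to a cycle $C^{*}$ with at least three blue $M$-edges, you observe that a $2$-blue arc gains $1$ or $2$ and a $3$-blue arc gains $2$ or $3$, and that you are stuck only if every $2$-blue arc closes with a blue edge while every $3$-blue arc closes with a red edge. You then assert that this rigid pattern ``cannot persist under connectivity and $n$-regularity'' and offer three alternative strategies (extending the alternating structure off $C^{*}$, combining a partial flip with a flip of another cycle, or a degree-count contradiction) without executing any of them, calling the rest ``routine bookkeeping.'' It is not: this leftover configuration is precisely the local picture of the exceptional $K_{n,n}\cup K_{n,n}$ colouring, so any argument ruling it out must genuinely invoke connectivity of the blue graph in a global way, and nothing in your sketch does so. (Note also that combining a partial flip of $C^{*}$ with a flip of another cycle only helps if some other cycle has $\beta_j=1$, which need not happen; and the arcs whose endpoints are red $M$-edges give further closing-edge constraints you have not analysed.) The paper's proof devotes its entire final portion --- the pigeonhole argument on the $K_{2,2}$ blocks followed by the structural claim that the bad pattern propagates to force the blue graph to be $K_{n,n}\cup K_{n,n}$ --- to exactly this step, so until you close this case the proposal proves the theorem only for $k\in\{0,1,2n-1,2n\}$ and for the disconnected case.
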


\noindent Before proving this, we show that for any \begin{math}k\end{math} odd, there exists a graph \begin{math}K_{2n, 2n}\end{math} that does not have a perfect matching with \begin{math}k\end{math} red edges and \begin{math}2n - k\end{math} blue ones.

\begin{lem}
\label{lem: odd}
    Let \begin{math}G_{\text{blue}}\end{math} be the graph of blue edges. If \begin{math}k\end{math} is odd and \begin{math}G_{\text{blue}}\end{math} is isomorphic to \begin{math}K_n \cup K_n,\end{math} then \begin{math}\not \exists\end{math} a perfect match with \begin{math}k\end{math} red edges and \begin{math}2n - k\end{math} blue ones.
\end{lem}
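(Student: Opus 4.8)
The plan is to turn the hypothesis on $G_{\text{blue}}$ into a block decomposition of the host $K_{2n,2n}$ and then apply a one-line degree count to an arbitrary perfect matching; I read the ``$K_n\cup K_n$'' in the statement as the bipartite graph $K_{n,n}\cup K_{n,n}$ from Theorem~\ref{thm:main}. Fix the bipartition $A\sqcup B$ of the $K_{2n,2n}$ with $|A|=|B|=2n$. Since a connected bipartite graph has a unique bipartition, each of the two $K_{n,n}$ components of $G_{\text{blue}}$ meets $A$ in exactly one of its sides and $B$ in the other, and both sides of a $K_{n,n}$ have size $n$; so we obtain a partition $A=A_1\sqcup A_2$, $B=B_1\sqcup B_2$ into sets of size $n$ with the blue edges being exactly those joining $A_1$ to $B_1$ or $A_2$ to $B_2$ (and hence the red edges exactly those joining $A_1$ to $B_2$ or $A_2$ to $B_1$).

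Next, let $M$ be any perfect matching of the underlying $K_{2n,2n}$, and for $i,j\in\{1,2\}$ let $m_{ij}$ denote the number of edges of $M$ with one endpoint in $A_i$ and the other in $B_j$. Saturating $A_1$ gives $m_{11}+m_{12}=n$, and saturating $B_1$ gives $m_{11}+m_{21}=n$, so $m_{12}=m_{21}$. Therefore the number of red edges of $M$ equals $m_{12}+m_{21}=2m_{12}$, which is even. Since this holds for \emph{every} perfect matching, there is no perfect matching with exactly $k$ red edges (and $2n-k$ blue ones) when $k$ is odd, which is the assertion of the lemma.

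I do not anticipate a genuine obstacle here. The only point requiring a word of justification is the first paragraph's claim that the isomorphism type $K_{n,n}\cup K_{n,n}$ forces the bipartition of the $K_{2n,2n}$ to split evenly across the two components, so that the four-block picture and the equations $m_{11}+m_{12}=m_{11}+m_{21}=n$ are legitimate; this is immediate from uniqueness of the bipartition of a connected bipartite graph together with the hypothesis that each vertex is incident with $n$ blue edges. Everything after that is the two-line computation above.
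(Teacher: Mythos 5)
Your proof is correct and is essentially the paper's own argument: both set up the block partition $A_1,A_2,B_1,B_2$ induced by the two $K_{n,n}$ components and then use the two saturation equations ($m_{11}+m_{12}=n$ and $m_{11}+m_{21}=n$ in your notation, $x+y=n$ and $x+(2n-k-y)=n$ in the paper's) to conclude that every perfect matching has an even number of red edges. The only cosmetic differences are that you derive the block structure from the isomorphism hypothesis (with the remark on uniqueness of the bipartition), where the paper simply exhibits the canonical coloring, and that your direct parity statement lets you skip the paper's unnecessary reduction to $k\le n$.
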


\begin{figure}[h!]
\centering
\begin{tikzpicture}
    \coordinate (X1) at (0, 4);
    \coordinate (X2) at (0, 3);
    \coordinate (X3) at (0, 1);
    \coordinate (X4) at (0, 0);
    
    \coordinate (Y1) at (3, 4);
    \coordinate (Y2) at (3, 3);
    \coordinate (Y3) at (3, 1);
    \coordinate (Y4) at (3, 0);
    
    \filldraw[black] (X1) circle (2pt) node[left] {\begin{math}a_1\end{math}};
    \filldraw[black] (X2) circle (2pt) node[left] {\begin{math}a_2\end{math}};
    \filldraw[black] (X3) circle (2pt) node[left] {\begin{math}a_3\end{math}};
    \filldraw[black] (X4) circle (2pt) node[left] {\begin{math}a_4\end{math}};
    
    \filldraw[black] (Y1) circle (2pt) node[right] {\begin{math}b_1\end{math}};
    \filldraw[black] (Y2) circle (2pt) node[right] {\begin{math}b_2\end{math}};
    \filldraw[black] (Y3) circle (2pt) node[right] {\begin{math}b_3\end{math}};
    \filldraw[black] (Y4) circle (2pt) node[right] {\begin{math}b_4\end{math}};
    
    \draw[red, thick] (X1) -- (Y1);
    \draw[red, thick] (X2) -- (Y2);
    \draw[red, thick] (X3) -- (Y3);
    \draw[red, thick] (X4) -- (Y4);
    \draw[red, thick] (X3) -- (Y4);
    \draw[red, thick] (X4) -- (Y3);
    \draw[red, thick] (X1) -- (Y2);
    \draw[red, thick] (X2) -- (Y1);
    
    \draw[blue, thick] (X1) -- (Y3);
    \draw[blue, thick] (X1) -- (Y4);
    \draw[blue, thick] (X2) -- (Y3);
    \draw[blue, thick] (X2) -- (Y4);
    \draw[blue, thick] (X3) -- (Y1);
    \draw[blue, thick] (X3) -- (Y2);
    \draw[blue, thick] (X4) -- (Y1);
    \draw[blue, thick] (X4) -- (Y2);
    
    \draw[dashed] (-0.5, 3.5) ellipse (0.7 and 1.3) node[left=0.7cm] {\begin{math}A_1\end{math}};
    \draw[dashed] (-0.5, 0.5) ellipse (0.7 and 1.3) node[left=0.7cm] {\begin{math}A_2\end{math}};
    
    \draw[dashed] (3.5, 3.5) ellipse (0.7 and 1.3) node[right=0.7cm] {\begin{math}B_1\end{math}};
    \draw[dashed] (3.5, 0.5) ellipse (0.7 and 1.3) node[right=0.7cm] {\begin{math}B_2\end{math}};

\end{tikzpicture}
\caption{The case when \begin{math}n = 2\end{math} and \begin{math}G_{\text{blue}} = K_2 \cup K_2\end{math}}
\label{fig:2k22_grouped}
\end{figure}
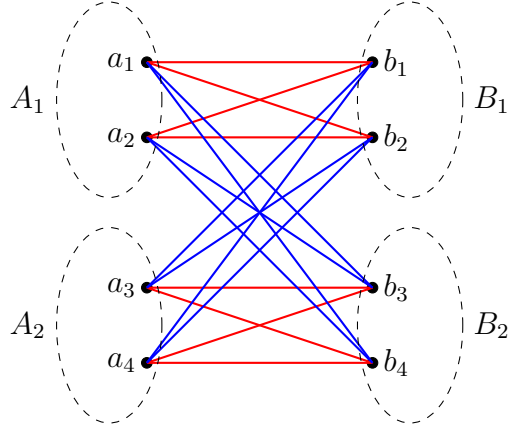

\begin{proof}
By symmetry between red and blue, we can assume that \begin{math}k \le n.\end{math} Indeed, if \begin{math}k > n,\end{math} just replace it with \begin{math}2n - k.\end{math}
\newline
Consider the complete bipartite graph with vertices \begin{math}\{a_1, \ldots, a_{2n}, b_1, \ldots, b_{2n}\}.\end{math} Let \begin{math}A_1 = \{a_1, \ldots, a_{n}\}, A_2 = \{a_{n + 1}, \ldots, a_{2n}\}, B_1 = \{b_1, \ldots, b_{n}\}, B_2 = \{b_{n + 1}, \ldots, b_{2n}\}.\end{math} Color the edges between \begin{math}A_1\end{math} and \begin{math}B_1\end{math} and the edges between \begin{math}A_2\end{math} and \begin{math}B_2\end{math} in red. Similarly, color the edges between \begin{math}A_1\end{math} and \begin{math}B_2\end{math} and the edges between \begin{math}A_2\end{math} and \begin{math}B_1\end{math} in blue. Therefore, every vertex will be a part of \begin{math}n\end{math} red edges and \begin{math}n\end{math} blue edges.
\newline
Assume for the sake of contradiction that this graph has a perfect matching \begin{math}M\end{math} with \begin{math}k\end{math} red edges and \begin{math}2n - k\end{math} blue edges. Let \begin{math}x\end{math} be the number of red edges in the matching \begin{math}M\end{math} between \begin{math}A_1\end{math} and \begin{math}B_1\end{math} and \begin{math}y\end{math} be the number of blue edges in \begin{math}M\end{math} between \begin{math}A_1\end{math} and \begin{math}B_2.\end{math} In particular, this forces the number of red edges in \begin{math}M\end{math} between \begin{math}A_2\end{math} and \begin{math}B_2\end{math} to be \begin{math}k - x\end{math} and the number of blue edges in \begin{math}M\end{math} between \begin{math}A_2\end{math} and \begin{math}B_1\end{math} to be \begin{math}2n - k - y.\end{math}
\newline
Since there are \begin{math}n\end{math} vertices in \begin{math}A_1\end{math} and we have a perfect matching given by the red and blue edges, we must have that 
\begin{displaymath}x + y = n\end{displaymath}
Similarly, there are \begin{math}n\end{math} vertices in \begin{math}B_1,\end{math} hence
\begin{displaymath}
    x + 2n - k - y = n
\end{displaymath}
Summing these up, we get that 
\begin{displaymath}
    2x + 2n - k = 2n \Rightarrow k = 2x
\end{displaymath}
However, this is a contradiction since \begin{math}k\end{math} is odd. Therefore, there does not exist a perfect matching with \begin{math}k\end{math} red edges and \begin{math}2n - k\end{math} blue edges, as desired.
\end{proof}


\begin{lem}
\label{lem: 02n}    
Color all the edges of a \begin{math}K_{2n, 2n}\end{math} with red and blue such that each vertex is part of \begin{math}n\end{math} red edges and \begin{math}n\end{math} blue ones. Then there exists a perfect matching with only blue edges.
\end{lem}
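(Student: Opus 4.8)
The plan is to observe that $G_{\text{blue}}$ is an $n$-regular bipartite graph whose two sides $\{a_1,\ldots,a_{2n}\}$ and $\{b_1,\ldots,b_{2n}\}$ have equal size $2n$, and then to invoke Hall's marriage theorem. A perfect matching using only blue edges is precisely a perfect matching in $G_{\text{blue}}$, so it suffices to check Hall's condition: for every subset $S$ of the $a$-side, $|N_{\text{blue}}(S)| \ge |S|$, where $N_{\text{blue}}(S)$ is the blue-neighborhood of $S$ inside the $b$-side.

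The key step is a double-counting argument on blue edges incident to $S$. Since every vertex of $S$ has exactly $n$ blue edges, the number of blue edges with an endpoint in $S$ is exactly $n|S|$. On the other hand, each of these edges has its other endpoint in $N_{\text{blue}}(S)$, and each vertex of the $b$-side is incident with exactly $n$ blue edges in total, so $N_{\text{blue}}(S)$ absorbs at most $n\,|N_{\text{blue}}(S)|$ blue edges. Combining, $n|S| \le n\,|N_{\text{blue}}(S)|$, hence $|N_{\text{blue}}(S)| \ge |S|$, which is Hall's condition. By Hall's theorem there is a matching of the $a$-side into the $b$-side saturating all $2n$ vertices of the $a$-side; since both sides have size $2n$, this matching is perfect, and it uses only blue edges.

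I do not expect a genuine obstacle here: the statement is the classical fact that every regular bipartite graph has a perfect matching, and the only thing to be careful about is stating the edge-count inequality in the correct direction. If it is convenient for later use, one could instead phrase the argument as the standard stronger result that an $n$-regular bipartite graph decomposes into $n$ edge-disjoint perfect matchings (proved by repeatedly extracting a perfect matching via Hall and deleting it, the remaining graph staying regular), but for the present lemma extracting a single perfect matching is all that is needed.
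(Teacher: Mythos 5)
Your proposal is correct and takes essentially the same route as the paper: both prove the lemma by applying Hall's marriage theorem to the $n$-regular blue graph. The only (cosmetic) difference is in how Hall's condition is verified --- you use the standard double-count of blue edges incident to $S$, while the paper splits into the cases $|S|\le n$ and $|S|>n$ and uses a pigeonhole argument in the second case; both verifications are valid.
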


\begin{proof}
    Let \begin{math}G = (X, Y, E),\end{math} where \begin{math}X\end{math} is the set of vertices on the left, \begin{math}Y\end{math} is the set of vertices on the right, and \begin{math}E\end{math} is the set of blue edges. Note that \begin{math}\deg(v) = n, \forall v \in X \cup Y.\end{math} We need to show that \begin{math}G\end{math} has a perfect matching, and we'll do so using Hall's Marriage Lemma. Indeed, using this, we only have to show that \begin{math}\forall S \subset X,\end{math} the number of neighbors \begin{math}N(S)\end{math} of \begin{math}S,\end{math} is at least \begin{math}|S|.\end{math} We prove this by considering the following two cases:
    \begin{itemize}
        \item \begin{math}|S| \le n:\end{math} take \begin{math}x \in S.\end{math} Then \begin{math}N(x) \subseteq N(S),\end{math} so \begin{displaymath}|S| \le n = |N(x)| \le |N(S)|\end{displaymath}
        \item \begin{math}|S| > n:\end{math} we will actually show that \begin{math}N(S) = Y.\end{math} Indeed, assume for the sake of contradiction that \begin{math}\exists y \in Y \backslash N(S).\end{math} Therefore, \begin{math}N(y) \cap S = \emptyset.\end{math} But \begin{math}|S| + |N(y)| > n + n = |X|,\end{math} so by the pigeonhole principle we must have that \begin{math}N(y) \cap S \neq \emptyset,\end{math} contradicting our assumption. Therefore, we have that \begin{math}N(S) = Y,\end{math} so 
        \begin{displaymath}
            |S| \le 2n = |Y| = |N(s)
        \end{displaymath}
    \end{itemize}
    Therefore, using Hall's Marriage Lemma we obtain that \begin{math}G\end{math} contains a perfect matching, as desired.
\end{proof}

\noindent We are now ready to prove Theorem \ref{thm:main}.

\begin{proof}[Proof of Theorem \ref{thm:main}]

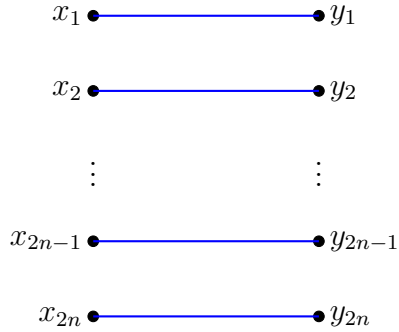
\begin{figure}[h!]
\centering
\begin{tikzpicture}
    \coordinate (X1) at (0, 4);
    \coordinate (X2) at (0, 3);
    \coordinate (XDots) at (0, 2); 
    \coordinate (X2n1) at (0, 1);
    \coordinate (X2n) at (0, 0);
    
    \coordinate (Y1) at (3, 4);
    \coordinate (Y2) at (3, 3);
    \coordinate (YDots) at (3, 2); 
    \coordinate (Y2n1) at (3, 1);
    \coordinate (Y2n) at (3, 0);
    
    \filldraw[black] (X1) circle (2pt) node[left] {\begin{math}x_1\end{math}};
    \filldraw[black] (X2) circle (2pt) node[left] {\begin{math}x_2\end{math}};
    \node at (XDots) {\vdots}; 
    \filldraw[black] (X2n1) circle (2pt) node[left] {\begin{math}x_{2n-1}\end{math}};
    \filldraw[black] (X2n) circle (2pt) node[left] {\begin{math}x_{2n}\end{math}};
    
    \filldraw[black] (Y1) circle (2pt) node[right] {\begin{math}y_1\end{math}};
    \filldraw[black] (Y2) circle (2pt) node[right] {\begin{math}y_2\end{math}};
    \node at (YDots) {\vdots}; 
    \filldraw[black] (Y2n1) circle (2pt) node[right] {\begin{math}y_{2n-1}\end{math}};
    \filldraw[black] (Y2n) circle (2pt) node[right] {\begin{math}y_{2n}\end{math}};
    
    \draw[blue, thick] (X1) -- (Y1);
    \draw[blue, thick] (X2) -- (Y2);
    \draw[blue, thick] (X2n1) -- (Y2n1);
    \draw[blue, thick] (X2n) -- (Y2n);
\end{tikzpicture}
\caption{Blue Perfect Match}
\label{fig:blue}
\end{figure}

We can assume without loss of generality that \begin{math}k \le n.\end{math} By Lemma \ref{lem: 02n}, there exists a perfect matching with \begin{math}2n\end{math} blue edges. By relabeling the vertices, we can assume that the blue edges in the perfect matching are \begin{math}(x_i, y_i),\end{math} as can be seen in figure \ref{fig:blue}. Our goal is to replace some of the blue edges \begin{math}(x_i, y_i)\end{math} in the matching with red edges of the form \begin{math}(x_{2l - 1}, y_{2l})\end{math} or \begin{math}(x_{2l}, y_{2l - 1}), \text{ for some } l \in \{1, \ldots, n\}.\end{math} The possible colorings of the subgraph with vertices \begin{math}x_{2l - 1}, x_{2l}, y_{2l - 1}, y_{2l}\end{math} are shown in figure \ref{fig:k22}.

\begin{figure}[h!]
    \centering

\begin{tikzpicture}[scale=1.5]

\tikzstyle{vertex}=[circle, fill, inner sep=1.5pt]

\begin{scope}
    \node[vertex, label=left:{\begin{math}x_{2l-1}\end{math}}] (x1) at (0, 1) {};
    \node[vertex, label=left:{\begin{math}x_{2l}\end{math}}] (x2) at (0, 0) {};
    \node[vertex, label=right:{\begin{math}y_{2l-1}\end{math}}] (y1) at (1, 1) {};
    \node[vertex, label=right:{\begin{math}y_{2l}\end{math}}] (y2) at (1, 0) {};
    
    \draw[blue, thick] (x1) -- (y1);
    \draw[blue, thick] (x2) -- (y2);
    \draw[blue, thick] (x1) -- (y2);
    \draw[blue, thick] (x2) -- (y1);
\end{scope}

\begin{scope}[xshift=2.8cm]
    \node[vertex, label=left:{\begin{math}x_{2l-1}\end{math}}] (x1) at (0, 1) {};
    \node[vertex, label=left:{\begin{math}x_{2l}\end{math}}] (x2) at (0, 0) {};
    \node[vertex, label=right:{\begin{math}y_{2l-1}\end{math}}] (y1) at (1, 1) {};
    \node[vertex, label=right:{\begin{math}y_{2l}\end{math}}] (y2) at (1, 0) {};
    
    \draw[blue, thick] (x1) -- (y1);
    \draw[blue, thick] (x2) -- (y2);
    \draw[blue, thick] (x1) -- (y2);
    \draw[red, thick] (x2) -- (y1);
\end{scope}

\begin{scope}[xshift=5.6cm]
    \node[vertex, label=left:{\begin{math}x_{2l-1}\end{math}}] (x1) at (0, 1) {};
    \node[vertex, label=left:{\begin{math}x_{2l}\end{math}}] (x2) at (0, 0) {};
    \node[vertex, label=right:{\begin{math}y_{2l-1}\end{math}}] (y1) at (1, 1) {};
    \node[vertex, label=right:{\begin{math}y_{2l}\end{math}}] (y2) at (1, 0) {};
    
    \draw[blue, thick] (x1) -- (y1);
    \draw[blue, thick] (x2) -- (y2);
    \draw[blue, thick] (x2) -- (y1);
    \draw[red, thick] (x1) -- (y2);
\end{scope}

\begin{scope}[xshift=8.4cm]
    \node[vertex, label=left:{\begin{math}x_{2l-1}\end{math}}] (x1) at (0, 1) {};
    \node[vertex, label=left:{\begin{math}x_{2l}\end{math}}] (x2) at (0, 0) {};
    \node[vertex, label=right:{\begin{math}y_{2l-1}\end{math}}] (y1) at (1, 1) {};
    \node[vertex, label=right:{\begin{math}y_{2l}\end{math}}] (y2) at (1, 0) {};
    
    \draw[blue, thick] (x1) -- (y1);
    \draw[blue, thick] (x2) -- (y2);
    \draw[red, thick] (x1) -- (y2);
    \draw[red, thick] (x2) -- (y1);
\end{scope}

\end{tikzpicture}

    \caption{Possible colorings of the graph with vertices \begin{math}\{x_{2l - 1},x_{2l}, y_{2l - 1}, y_{2l}\end{math}}
    \label{fig:k22}
\end{figure}
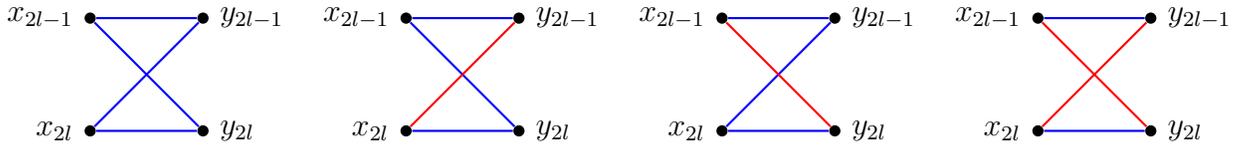

\noindent We wish to replace some of the blue edges \begin{math}(x_{2l - 1}, y_{2l - 1}), (x_{2l}, y_{2l})\end{math} in our perfect matching with edges \begin{math}(x_{2l - 1}, y_{2l}), (x_{2l}, y_{2l - 1}),\end{math} with at least one of the latter edges being red. Obviously, the only configuration where this does not work out is the leftmost one in figure \ref{fig:k22}. Let us assume that \begin{math}l\end{math} is an integer for which \begin{math}(x_{2l - 1}, y_{2l}), (x_{2l}, y_{2l - 1})\end{math} are blue. In this case there are \begin{math}n\end{math} red edges in the set \begin{math}\{(x_{2l - 1}, y_i\}), i \neq 2l - 1, 2l\}\end{math} and \begin{math}n\end{math} red edges in the set \begin{math}\{(x_{i}, y_{2l}\}), i \neq 2l - 1, 2l\}.\end{math} We will now use the pigeonhole principle; we create \begin{math}2n\end{math} "boxes" as follows: for \begin{math}i \in \{1, \ldots, n\}\end{math} if \begin{math}(x_{2l - 1}, y_{2i})\end{math} is red, we put it in box \begin{math}B_{2i - 1};\end{math} if \begin{math}(x_{2i - 1}, y_{2l})\end{math} is red, we put it in box \begin{math}B_{2i};\end{math} if \begin{math}(x_{2l - 1}, y_{2i - 1})\end{math} is red, we put it in box \begin{math}B_{2i};\end{math} if \begin{math}(x_{2l}, y_{2i})\end{math} is red, we put it in box \begin{math}B_{2i};\end{math} note that boxes \begin{math}B_{2l - 1}\end{math} and \begin{math}B_{2l}\end{math} are empty by construction. Therefore, we have at most \begin{math}2n - 2\end{math} nonempty "boxes", but \begin{math}2n\end{math} "pigeons" (the red edges of the above types). Therefore, by the pigeonhole principle, there exists \begin{math}i\end{math} such that either both \begin{math}(x_{2l - 1}, y_{2i}) \text{ and } (x_{2i - 1}, y_{2l})\end{math} are red, or both \begin{math}(x_{2l - 1}, y_{2i - 1}) \text{ and } (x_{2l}, y_{2i})\end{math} are red. If both \begin{math}(x_{2l - 1}, y_{2i}) \text{ and } (x_{2i - 1}, y_{2l})\end{math} are red, then relabeling \begin{math}x_{2l} \leftrightarrow x_{2i}\end{math} and \begin{math}y_{2l} \leftrightarrow y_{2i}\end{math} we get a new configuration of the desired form (namely the edges \begin{math}(x_{2l- 1}, y_{2l})\end{math} and \begin{math}(x_{2i - 1}, y_{2i})\end{math} will be red; this process can be visualized in figure \ref{fig:interchanging}. The proof when both \begin{math}(x_{2l - 1}, y_{2i - 1}) \text{ and } (x_{2l}, y_{2i})\end{math} are red is analogous. We repeat the process until there is no \begin{math}i\end{math} with both \begin{math}(x_{2i - 1}, y_{2i}), (x_{2i}, y_{2i - 1})\end{math} blue. 

\noindent We now distinguish two cases depending on the parity of \begin{math}k.\end{math} Suppose first that \begin{math}k\end{math} is even. If there are at least \begin{math}k/2\end{math} integers \begin{math}i\end{math} with both \begin{math}(x_{2i - 1}, y_{2i}), (x_{2i}, y_{2i - 1})\end{math} red, then we can choose exactly \begin{math}k\end{math} red edges \begin{math}(x_{2i - 1}, y_{2i})\end{math} and \begin{math}(x_{2i}, y_{2i - 1})\end{math} from these (since \begin{math}k\end{math} is even) and take the rest \begin{math}2n - k\end{math} blue edges of the form \begin{math}(x_j, y_j)\end{math} to create a perfect matching. If there are less \begin{math}k/2\end{math} integers \begin{math}i\end{math} with both \begin{math}(x_{2i - 1}, y_{2i}), (x_{2i}, y_{2i - 1})\end{math} red, choose all such edges. Then pick enough red edges of the form \begin{math}(x_{2j - 1}, y_{2j})\end{math} or \begin{math}(x_{2j}, y_{2j - 1})\end{math} so that we have chosen exactly \begin{math}k\end{math} red edges. We can do this since \begin{math}k \le n\end{math} and by our algorithm, at least one of edges \begin{math}(x_{2i - 1}, y_{2i})\end{math} and \begin{math}(x_{2i}, y_{2i - 1})\end{math} is red. For the blue edges, choose \begin{math}(x_{2i - 1}, y_{2i})\end{math} if it is blue and \begin{math}(x_{2i}, y_{2i - 1})\end{math} was one of the chosen red edges, \begin{math}(x_{2i}, y_{2i - 1})\end{math} if it is blue and \begin{math}(x_{2i - 1}, y_{2i})\end{math} was one of the chosen red edges, and \begin{math}(x_i, y_i),\end{math} if \begin{math}x_i, y_i\end{math} are not part of any chosen red edge. In this way, we obtain a perfect matching with \begin{math}k\end{math} red edges and \begin{math}2n - k\end{math} blue edges, as desired.
 
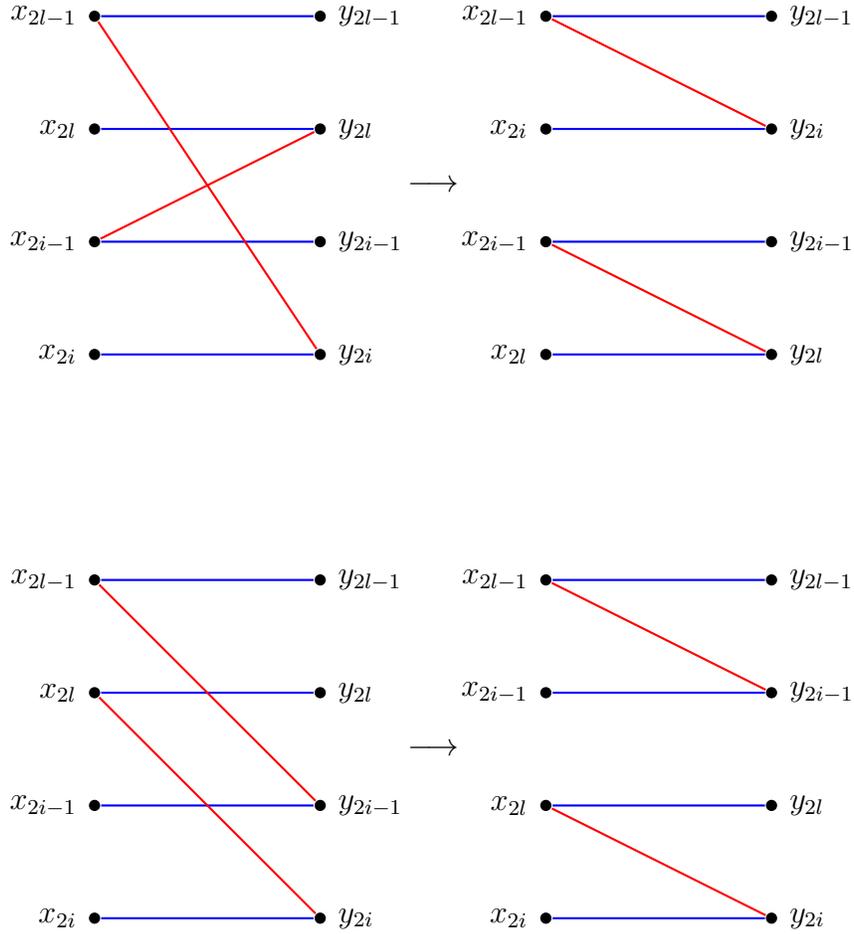
\begin{figure}[h!]
    \centering
    \begin{tikzpicture}[scale=1.5, every node/.style={draw=none}, every path/.style={thick}]

\node[circle, fill=black, inner sep=1.5pt, label=left:\begin{math}x_{2l-1}\end{math}] (x2k-1) at (0, 3) {};
\node[circle, fill=black, inner sep=1.5pt, label=left:\begin{math}x_{2l}\end{math}] (x2k) at (0, 2) {};
\node[circle, fill=black, inner sep=1.5pt, label=left:\begin{math}x_{2i-1}\end{math}] (x2i-1) at (0, 1) {};
\node[circle, fill=black, inner sep=1.5pt, label=left:\begin{math}x_{2i}\end{math}] (x2i) at (0, 0) {};

\node[circle, fill=black, inner sep=1.5pt, label=right:\begin{math}y_{2l-1}\end{math}] (y2k-1) at (2, 3) {};
\node[circle, fill=black, inner sep=1.5pt, label=right:\begin{math}y_{2l}\end{math}] (y2k) at (2, 2) {};
\node[circle, fill=black, inner sep=1.5pt, label=right:\begin{math}y_{2i-1}\end{math}] (y2i-1) at (2, 1) {};
\node[circle, fill=black, inner sep=1.5pt, label=right:\begin{math}y_{2i}\end{math}] (y2i) at (2, 0) {};

\draw[blue] (x2k-1) -- (y2k-1);
\draw[blue] (x2k) -- (y2k);
\draw[blue] (x2i-1) -- (y2i-1);
\draw[blue] (x2i) -- (y2i);

\draw[red] (x2k-1) -- (y2i);
\draw[red] (x2i-1) -- (y2k);

\node[draw=none, rectangle] at (3, 1.5) {\begin{math}\longrightarrow\end{math}};

\node[circle, fill=black, inner sep=1.5pt, label=left:\begin{math}x_{2l-1}\end{math}] (x2k-1_r) at (4, 3) {};
\node[circle, fill=black, inner sep=1.5pt, label=left:\begin{math}x_{2i}\end{math}] (x2i_r) at (4, 2) {};
\node[circle, fill=black, inner sep=1.5pt, label=left:\begin{math}x_{2i-1}\end{math}] (x2i-1_r) at (4, 1) {};
\node[circle, fill=black, inner sep=1.5pt, label=left:\begin{math}x_{2l}\end{math}] (x2k_r) at (4, 0) {};

\node[circle, fill=black, inner sep=1.5pt, label=right:\begin{math}y_{2l-1}\end{math}] (y2k-1_r) at (6, 3) {};
\node[circle, fill=black, inner sep=1.5pt, label=right:\begin{math}y_{2i}\end{math}] (y2i_r) at (6, 2) {};
\node[circle, fill=black, inner sep=1.5pt, label=right:\begin{math}y_{2i-1}\end{math}] (y2i-1_r) at (6, 1) {};
\node[circle, fill=black, inner sep=1.5pt, label=right:\begin{math}y_{2l}\end{math}] (y2k_r) at (6, 0) {};

\draw[blue] (x2k-1_r) -- (y2k-1_r);
\draw[blue] (x2k_r) -- (y2k_r);
\draw[blue] (x2i-1_r) -- (y2i-1_r);
\draw[blue] (x2i_r) -- (y2i_r);

\draw[red] (x2k-1_r) -- (y2i_r);
\draw[red] (x2i-1_r) -- (y2k_r);


\node[circle, fill=black, inner sep=1.5pt, label=left:\begin{math}x_{2l-1}\end{math}] (x2k-1_b) at (0, -2) {};
\node[circle, fill=black, inner sep=1.5pt, label=left:\begin{math}x_{2l}\end{math}] (x2k_b) at (0, -3) {};
\node[circle, fill=black, inner sep=1.5pt, label=left:\begin{math}x_{2i-1}\end{math}] (x2i-1_b) at (0, -4) {};
\node[circle, fill=black, inner sep=1.5pt, label=left:\begin{math}x_{2i}\end{math}] (x2i_b) at (0, -5) {};

\node[circle, fill=black, inner sep=1.5pt, label=right:\begin{math}y_{2l-1}\end{math}] (y2k-1_b) at (2, -2) {};
\node[circle, fill=black, inner sep=1.5pt, label=right:\begin{math}y_{2l}\end{math}] (y2k_b) at (2, -3) {};
\node[circle, fill=black, inner sep=1.5pt, label=right:\begin{math}y_{2i-1}\end{math}] (y2i-1_b) at (2, -4) {};
\node[circle, fill=black, inner sep=1.5pt, label=right:\begin{math}y_{2i}\end{math}] (y2i_b) at (2, -5) {};

\draw[blue] (x2k-1_b) -- (y2k-1_b);
\draw[blue] (x2k_b) -- (y2k_b);
\draw[blue] (x2i-1_b) -- (y2i-1_b);
\draw[blue] (x2i_b) -- (y2i_b);

\draw[red] (x2k-1_b) -- (y2i-1_b);
\draw[red] (x2k_b) -- (y2i_b);

\node[draw=none, rectangle] at (3, -3.5) {\begin{math}\longrightarrow\end{math}};

\node[circle, fill=black, inner sep=1.5pt, label=left:\begin{math}x_{2l-1}\end{math}] (x2k-1_r_b) at (4, -2) {};
\node[circle, fill=black, inner sep=1.5pt, label=left:\begin{math}x_{2i-1}\end{math}] (x2i-1_r_b) at (4, -3) {};
\node[circle, fill=black, inner sep=1.5pt, label=left:\begin{math}x_{2l}\end{math}] (x2k_r_b) at (4, -4) {};
\node[circle, fill=black, inner sep=1.5pt, label=left:\begin{math}x_{2i}\end{math}] (x2i_r_b) at (4, -5) {};

\node[circle, fill=black, inner sep=1.5pt, label=right:\begin{math}y_{2l-1}\end{math}] (y2k-1_r_b) at (6, -2) {};
\node[circle, fill=black, inner sep=1.5pt, label=right:\begin{math}y_{2i-1}\end{math}] (y2i-1_r_b) at (6, -3) {};
\node[circle, fill=black, inner sep=1.5pt, label=right:\begin{math}y_{2l}\end{math}] (y2k_r_b) at (6, -4) {};
\node[circle, fill=black, inner sep=1.5pt, label=right:\begin{math}y_{2i}\end{math}] (y2i_r_b) at (6, -5) {};

\draw[blue] (x2k-1_r_b) -- (y2k-1_r_b);
\draw[blue] (x2k_r_b) -- (y2k_r_b);
\draw[blue] (x2i-1_r_b) -- (y2i-1_r_b);
\draw[blue] (x2i_r_b) -- (y2i_r_b);

\draw[red] (x2k-1_r_b) -- (y2i-1_r_b);
\draw[red] (x2k_r_b) -- (y2i_r_b);

    \end{tikzpicture}
    \caption{Interchanging the vertices yields the desired form}
    \label{fig:interchanging}
\end{figure}

\noindent Assume now that \begin{math}k\end{math} be odd. Arguing the same as above, we see that if there exist \begin{math}i, j, i \neq j\end{math} with \begin{math}(x_i, y_i), (x_j, y_j), (x_i, y_j)\end{math} blue and \begin{math}(x_j, y_i)\end{math} red, we have a perfect matching with \begin{math}k\end{math} red edges and \begin{math}2n - k\end{math} blue edges. Therefore, the only case in which the proof can not be recreated is when (even after interchanging some vertices) for any \begin{math}i, j,\end{math} we have that \begin{math}(x_i, y_i) \text{ and } (x_j, y_j)\end{math} are blue, and \begin{math}(x_i, y_j) \text{ and } (x_j, y_i)\end{math} are either both red or both blue.

\noindent We show that in this case, if \begin{math}(x_i, y_j), (x_j, y_i), (x_i, y_l), (x_l, y_i)\end{math} are blue, then we must have that \begin{math}(x_j, y_l), (x_l, y_j)\end{math} are blue as well. Assume for the sake of contradiction that they are both red (we know that they are both of the same color). Then interchanging \begin{math}y_i \leftrightarrow y_l,\end{math} we get that \begin{math}(x_i, y_i), (x_j, y_j), (x_l, y_l), (x_i, y_j), (x_i, y_l), (x_j, y_l), (x_l, y_i)\end{math} are all blue, but \begin{math}(x_j, y_i)\end{math} and \begin{math}(x_l, y_j)\end{math} are red. In this case, the subgraph with vertices \begin{math}x_i, x_j, y_i, y_j\end{math} has blue edges \begin{math}(x_i, y_i), (x_j, y_j),\end{math} and \begin{math}(x_i, y_j),\end{math} and red edge \begin{math}(x_j, y_i).\end{math} But we are in the case where no such configuration exists, so we get a contradiction. Therefore, we must have that the graph of blue edges is the union of two complete \begin{math}K_{n, n}\end{math} graphs.
\end{proof}

\section{Concluding Remakrs}

\begin{rek}
    Let \begin{math}G = (X, Y, E)\end{math} be a bipartite graph with \begin{math}|X| = |Y| = 2n\end{math} such that \begin{math}\deg(v) = n, \forall v \in X \cup Y.\end{math} Then \begin{math}G\end{math} is connected, unless \begin{math}G\end{math} is isomorphic to a disjoint union of two copies of \begin{math}K_{n, n}.\end{math}
\end{rek}

\begin{proof}

Assume that \begin{math}G\end{math} is disconnected. If there is no path between \begin{math}x_i\end{math} and \begin{math}x_j\end{math}, then \begin{math}N(x_i) \cap N(x_j) = \emptyset.\end{math} Without loss of generality we can assume that \begin{math}N(x_i) = \{y_ 1, \ldots, y_n\}, N(x_j) = \{y_{n + 1}, \ldots, y_{2n}\}.\end{math} Since there is no path between \begin{math}x_i\end{math} and \begin{math}x_j,\end{math} we must have that \begin{math}N(y_a) \cap N(y_b) = \emptyset, \forall a \in \{1, \ldots, n\}, b \in \{n + 1, 2n\}.\end{math} But since \begin{math}\deg(v) = n, \forall v,\end{math} we have that \begin{math}N(y_{a_1}) = N(y_{a_2}), \forall a_1, a_2 \in \{1, 2, \ldots, n\}\end{math} and \begin{math}N(y_{b_1}) = N(y_{b_2}), \forall b \in \{n + 1, 2n\}.\end{math} Therefore, the graph is isomorphic to two copies of \begin{math}K_{n, n}.\end{math} The proof when there is no path between \begin{math}x_i\end{math} and \begin{math}y_j\end{math} is analogous.

\end{proof}

\noindent Using the above remark, we easily obtain a reformulation of Theorem \ref{thm:main}:

\begin{cor}
    Let \begin{math}k, n \in \mathbb{N}\end{math} with \begin{math}k\end{math} odd. Color all the edges of a \begin{math}K_{2n, 2n}\end{math} with red and blue such that each vertex is part of \begin{math}n\end{math} red edges and \begin{math}n\end{math} blue ones. Then there exists a perfect matching with \begin{math}k\end{math} red edges and \begin{math}2n - k\end{math} blue ones if and only if the graph of blue edges is connected.
\end{cor}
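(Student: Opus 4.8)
The plan is to derive the Corollary directly from Theorem~\ref{thm:main} together with the Remark, rather than reproving anything from scratch. Fix $k$ odd and a two-coloring of $K_{2n,2n}$ in which every vertex has red-degree and blue-degree equal to $n$, and let $G_{\text{blue}}$ be the graph of blue edges. Note that $G_{\text{blue}}$ is a bipartite graph on parts of size $2n$ in which every vertex has degree exactly $n$, so the Remark applies to it: either $G_{\text{blue}}$ is connected, or it is isomorphic to $K_{n,n}\cup K_{n,n}$. These two cases are mutually exclusive, since $K_{n,n}\cup K_{n,n}$ is disconnected. So proving the ``if and only if'' reduces to matching these two cases against Theorem~\ref{thm:main}.

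First I would handle the forward direction in its contrapositive form: suppose $G_{\text{blue}}$ is \emph{not} connected. By the Remark, $G_{\text{blue}}\cong K_{n,n}\cup K_{n,n}$. Since $k$ is odd, Theorem~\ref{thm:main} (precisely the stated exceptional case) tells us there is no perfect matching with $k$ red edges and $2n-k$ blue edges. Hence if such a matching exists, $G_{\text{blue}}$ must be connected. Conversely, suppose $G_{\text{blue}}$ is connected. Then $G_{\text{blue}}$ is not isomorphic to $K_{n,n}\cup K_{n,n}$ (the latter being disconnected), so the exceptional hypothesis of Theorem~\ref{thm:main} fails, and the theorem guarantees a perfect matching with $k$ red edges and $2n-k$ blue edges. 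Combining the two directions gives the stated equivalence.

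The only real subtlety — and the step I would be most careful about — is making sure the dichotomy from the Remark lines up exactly with the exceptional case in Theorem~\ref{thm:main}. The theorem's exception is phrased as ``$G_{\text{blue}}$ is isomorphic to $K_{n,n}\cup K_{n,n}$'' while the Remark's dichotomy is ``connected, or isomorphic to a disjoint union of two copies of $K_{n,n}$''; one should simply observe these refer to the same graph, and that ``connected'' and ``$\cong K_{n,n}\cup K_{n,n}$'' cannot hold simultaneously. (One might also remark that the hypothesis $k\le 2n$ is automatic here and that the edge-symmetry reduction $k\mapsto 2n-k$ preserves parity, so restricting to odd $k$ is harmless.) With that bookkeeping in place the Corollary is immediate, and no further graph-theoretic work is needed beyond quoting the two earlier results.
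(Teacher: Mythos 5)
Your proposal is correct and matches the paper's (essentially one-line) derivation: the paper likewise obtains the corollary by combining the Remark's dichotomy (degree-$n$ regular bipartite on $2n+2n$ vertices is connected or $\cong K_{n,n}\cup K_{n,n}$) with Theorem~\ref{thm:main}. The only pedantic note is that the non-existence in the exceptional case is really supplied by Lemma~\ref{lem: odd} rather than by the literal wording of the theorem's ``unless'' clause, but this is exactly how the paper intends it to be read.
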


\section{Application to the permanent}

\noindent There has been extensive research on the permanent of a matrix with \begin{math}0, 1\end{math} entries. In 1963, Minc conjectured that if \begin{math}A\end{math} is a square binary matrix of size \begin{math}n\end{math} and \begin{math}r_i = a_{i, 1} + \ldots + a_{i, n},\end{math} then \begin{math}0 \le \text{per}(A) \le \prod_{i = 1}^{n} (r_i!)^{1/r_i};\end{math} this was proved by Bregman \emph{[1]} in 1973. We can rephrase Theorem \ref{thm:main} as follows:

\begin{cor}
    Let \begin{math}A\end{math} be a square matrix of size \begin{math}2n\end{math} with \begin{math}0, 1\end{math} entries such that the sum of the entries in each row and column is \begin{math}n.\end{math} Moreover, let \begin{math}k\end{math} be an even integer, \begin{math}0 \le k \le n.\end{math} There is an algorithm that selects \begin{math}k\end{math} zero entries, removes the rows and columns on which these entries lie, and assures that the remaining square binary matrix of size \begin{math}2n - k\end{math} has nonzero permanent.
\end{cor}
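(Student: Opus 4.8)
The plan is to recast the statement in the language of Theorem~\ref{thm:main} and then quote that theorem. First I would set up the dictionary: index the rows of $A=(a_{ij})$ by vertices $x_1,\dots,x_{2n}$ and the columns by $y_1,\dots,y_{2n}$, and color the edge $(x_i,y_j)$ of $K_{2n,2n}$ \emph{red} if $a_{ij}=1$ and \emph{blue} if $a_{ij}=0$. The hypothesis that every row sums to $n$ says each $x_i$ is incident with exactly $n$ red edges, hence $n$ blue edges; the hypothesis on the columns does the same for each $y_j$. So this coloring satisfies the hypotheses of Theorem~\ref{thm:main}.

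Next I would make precise the correspondence between selections of zero entries and perfect matchings. For ``removing the rows and columns on which the $k$ chosen zeros lie'' to leave a well-defined square matrix of size $2n-k$, the chosen zeros must occupy $k$ distinct rows and $k$ distinct columns; writing them as $(i_1,j_1),\dots,(i_k,j_k)$, this is exactly the statement that the blue edges $(x_{i_1},y_{j_1}),\dots,(x_{i_k},y_{j_k})$ form a matching. Let $A'$ be the resulting submatrix. Since $A'$ is a $0,1$ matrix, $\operatorname{per}(A')>0$ iff some bijection $\sigma$ from the surviving rows to the surviving columns has $a_{i,\sigma(i)}=1$ throughout, i.e.\ iff the red edges give a perfect matching of the $2n-k$ vertices remaining after deleting $x_{i_1},\dots,x_{i_k},y_{j_1},\dots,y_{j_k}$. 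Joining that red matching to the $k$ blue edges produces a perfect matching of $K_{2n,2n}$ with exactly $k$ blue and $2n-k$ red edges, and conversely any perfect matching with $k$ blue and $2n-k$ red edges, split into its two color classes, gives a valid choice of $k$ zeros with $\operatorname{per}(A')\ge1$.

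It remains to exhibit a perfect matching of $K_{2n,2n}$ with $k$ blue and $2n-k$ red edges. Apply Theorem~\ref{thm:main} with parameter $2n-k$ in place of $k$ (equivalently, use the red/blue symmetry): since $2n$ and $k$ are both even, $2n-k$ is even, so the exceptional case of Theorem~\ref{thm:main} --- the parameter odd and the blue graph isomorphic to $K_{n,n}\cup K_{n,n}$ --- does not occur, and the required matching exists. Because the proof of Theorem~\ref{thm:main} is constructive (it produces the all-blue matching of Lemma~\ref{lem: 02n} through Hall's criterion, performs a bounded sequence of pigeonhole-guided relabelings, and then selects the red and blue edges), reading off the blue edges of the matching it returns furnishes the claimed algorithm.

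I do not expect a real obstacle: all the mathematics lives in Theorem~\ref{thm:main}, and what is left is translation. The only point that needs attention is the one flagged above --- that ``select $k$ zero entries and delete the corresponding rows and columns'' silently requires those entries to lie in distinct rows and distinct columns, and that this is precisely the matching condition --- so that the equivalence with Theorem~\ref{thm:main} is exact; the parity check ($k$ even $\Rightarrow 2n-k$ even) that rules out the exceptional configuration is then immediate.
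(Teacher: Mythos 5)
Your proposal is correct and is exactly the translation the paper intends (the paper states this corollary as a direct rephrasing of Theorem~\ref{thm:main} and gives no further proof). Your dictionary (red $=$ entry $1$, blue $=$ entry $0$, selected zeros $=$ a blue matching of size $k$, nonzero permanent of the residual matrix $=$ a red perfect matching on the remaining vertices) together with the parity observation that $2n-k$ is even is precisely what is needed, and your remark that the chosen zeros must occupy distinct rows and columns is a worthwhile clarification the paper leaves implicit.
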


\section{Acknowledgement}
\noindent I would like to thank Kiril Bangachev, Olivier Bernardi, Wesley Pegden, and Kristof Zolomy for our extremely helpful discussions regarding this problem.


\begin{thebibliography}{9999}

\bibitem{Br73}
\newblock L. Bregman, Some properties of nonnegative matrices and their permanents, Soviet Math. Dokl. (1973), 14: 945–949.

\bibitem{PH82}
\newblock C.H. Papadimitriou and M. Yannakakis, The complexity of restricted spanning tree problems, Journal of the ACM, 29(2):285–309, 1982.

\bibitem{MVV87} 
\newblock K. Mulmuley, U. V. Vazirani, V. V. Vazirani, Matching is as easy as matrix inversion, Combinatorica 7 (1987), 105-113.

\bibitem{GT25}
\newblock R. Diestel, Graph Theory, Sixth Edition, Springer-Verlag, Heidelberg
Graduate Texts in Mathematics, Volume 173, 2025


\end{thebibliography}
\end{document}